\theoremstyle{definition}
\newtheorem{lemma}{Lemma}
\newtheorem{theorem}[lemma]{Theorem}
\newtheorem{proposition}[lemma]{Proposition}
\newtheorem{definition}[lemma]{Definition}
\newcommand{\periodafter}[1]{#1.}
\titleformat{\subsection}[runin]
{\normalfont\bfseries}{\thesubsection}{0.5em}{\periodafter}
\numberwithin{equation}{section}
\renewcommand{\thesubsection}{\arabic{section}.\arabic{subsection}.}
\newcounter{RomanNumber}
\title{\textsf{The multiplier and cohomology of \\ Lie   superalgebras}}
\author{\textsc{Yang Liu$^{1}$ and
  \textsc{Wende Liu}$^{1,2}$}\footnote{Correspondence:  wendeliu@ustc.edu.cn (W. D. Liu). The first named author (Y. Liu) was supported by the NSF
  of China (11501151) and the NSF of Heilongjiang Province (A2015003, A2017005). The second named author (W. D. Liu) was supported by the NSF
  of China (11171055, 11471090).}\;\;\;
  \\
  \\
  \ \ \textit{$^{1}$Department of Mathematics},
  \textit{Harbin Institute of Technology} \\
  \textit{Harbin 150006, China}
  \\
   \ \ \textit{$^{2}$School of Mathematical Sciences},
  \textit{Harbin Normal University} \\
  \textit{Harbin 150025, China}
  }
\date{ }
\begin{document}
\maketitle
\begin{quotation}
\small\noindent \textbf{Abstract}:
In this paper, all (super)algebras are over a field $\mathbb{F}$ of characteristic different from $2, 3$. We construct the so-called  5-sequences of cohomology for central extensions of a Lie superalgebra and prove that they are exact. Then we prove that the multipliers of a Lie superalgebra are  isomorphic to the second cohomology group with coefficients in the trivial module for the Lie superalgebra under consideration.

\vspace{0.2cm} \noindent{\textbf{Keywords}}:  multiplier, cohomology, 5-sequence

\vspace{0.2cm} \noindent{\textbf{Mathematics Subject Classification 2010}}: 17B05, 17B30, 17B56
\end{quotation}

\setcounter{section}{0}
\section{Introduction}
In the early 1900's, multipliers and covers were first studied by I. Schur in finite group theory, where the multiplier is defined to be the second cohomology group with coefficient in $\mathbb{C}^{*}$ \cite{1}. Let $0\longrightarrow R\longrightarrow F\longrightarrow G\longrightarrow 0$ be a free presentation of a group $G$. Then it can be shown that the multiplier of $G$ is isomorphic to $(F^{2}\cap R)/[R, F]$ by Hopf's formula \cite{2}. The notions of multipliers were naturally generalized to Lie algebra case. Let $0\longrightarrow R\longrightarrow F\longrightarrow L\longrightarrow 0$ be a free presentation of a Lie algebra $L$ over a field $\mathbb{F}$. The multiplier of $L$ is defined to be the factor Lie algebra $(F^{2}\cap R)/[R, F]$, denoted by $\mathcal{M}(L)$.
P. Batten proved that the multiplier for a finite dimensional Lie algebra is isomorphic to $\mathrm{H}^{2}(L, \mathbb{F})$, the second cohomology group of $L$, where $L$ acts trivially on $\mathbb{F}$ \cite{3, 4}. Since the multiplier of a Lie algebra $L$ is abelian, one may pay attention to the dimension of the multiplier. K. Moneyhun gave an equivalent definition of the multiplier by the maximal defining pairs and proved that for a Lie algebra $L$ of dimension $n$, it must be that $\mathrm{dim}\mathcal{M}(L)=n(n-1)/2-t(L)$ for some non-negative integer $t(L)$ \cite{5}. A. R. Salemkar, V. Alamian and H. Mohammadzadeh studied some properties of multipliers and covers of Lie algebras \cite{6}.  A. R. Salemkar, B. Edalatzadeh and H. Mohammadzadeha gave a complete structure characterizer of covers of perfect Lie algebras \cite{7}. There were many results about multipliers of nilpotent Lie algebras \cite{8, 9, 10, 11, 12, 13, 14, 15, 16}. A classification of nilpotent Lie algebras for which $t(L)$ takes certain
values was obtained \cite{8, 9, 10}. One might find various bounds of the dimension of the multipliers of nilpotent Lie algebras \cite{11, 12, 13, 14, 15, 16}. S. Nayak generalized the definition of multipliers and covers of Lie algebras to Lie superalgebras case and
introduced the concepts of isoclinism in Lie superalgebras \cite{17, 18}.  Y. L. Zhang and W. D. Liu introduced the concept of (super-)multiplier-rank for Lie superalgeras and classified  the nilpotent Lie superalgebras of multiplier-rank $\leq$ 2 \cite{19}. X. X. Miao and W. D. Liu introduced the definition of stem extensions for Lie superalgebras and proved that
multipliers and  covers  always exist for a Lie superalgebra  and they are unique up to Lie superalgebra isomorphism \cite{20}.

 In this paper, we first establish the 5-sequences of cohomology for central extensions of Lie superalgebras. Then we prove they are exact and we prove that the multipliers of a Lie superalgebra are  isomorphic to the second cohomology group with coefficients in the trivial module for the Lie superalgebra under consideration.

\section{Notation and preparatory results}
In this paper, all (super)algebras are over a field $\mathbb{F}$ of characteristic different from $2, 3$. For a Lie superalgebra $L$, we use $[~, ~]$ to denote the operation of $L$. For a homogeneous element $x\in L$, we use $|x|$ to denote the degree of $x$.
The symbol $|x|$  implies that $x$ has been assumed to be  a  homogeneous element.

Let us recall the notion of extensions of Lie superalgebras and some basic properties. Note that a Lie superalgebra homomorphism is always an even  homomorphism and an ideal of a Lie superalgebra is always a $\mathbb{Z}_{2}$-graded ideal. An extension of a Lie superalgebra $L$ by  $M$ is a short exact sequence of Lie superalgebra homomorphisms:
$$0\longrightarrow M\stackrel{\alpha}{\longrightarrow} K \stackrel{\beta}{\longrightarrow} L \longrightarrow 0.$$
We usually identify $M$ with a subalgebra of $K$ and omit the embedding map $\alpha$.

Suppose we are given two
extensions  $$e:0\longrightarrow M\longrightarrow K\stackrel{\pi}{\longrightarrow}L\longrightarrow 0,$$ $$e':0\longrightarrow M\longrightarrow K'\stackrel{\pi'}{\longrightarrow}L\longrightarrow 0.$$ We called $e$ equivalent to $e'$ if there exists a Lie superalgebra homomorphism $\rho: K\longrightarrow K'$ such that the following diagram is commutative.
$$\xymatrix{0\ar[r]&M\ar@{=}[d]_{\mathrm{id}}\ar[r]&K\ar[d]_{\rho}\ar[r]^{\pi}&L\ar@{=}[d]_{\mathrm{id}}\ar[r]&0\\
0\ar[r]&M\ar[r]&K'\ar[r]^{\pi'}&L\ar[r]&0.}$$

A short exact sequence
$0\longrightarrow M\longrightarrow K\longrightarrow L\longrightarrow 0$ of Lie superalgebras is a central extension of $L$ if $M\subseteq \mathrm{Z}(K)$, where $\mathrm{Z}(K)$ denotes the center of $M$. Note that the Lie superalgebra homomorphism is a Lie superalgebra isomorphism for equivalent central extensions. Clearly, the equivalence of central extensions is an equivalent relationship and  the set of all central extensions of $L$ by $M$ is divided into equivalence classes of central extensions.

Let $e:0\longrightarrow M\longrightarrow K\stackrel{\pi}{\longrightarrow}L\longrightarrow 0$ be a central extension.
Take $\mu: L\longrightarrow K$ to be an even linear map such that $\pi\circ\mu=\mathrm{id}_{L}$, that is, $\mu$ is a section of $\pi$. Define $f(x, y)=[\mu(x), \mu(y)]-\mu([x,y])$, where $x, y\in L$. We will show that $f$ is a bilinear map from $L\times L$ to $M$, which is called a factor set defined by $\mu$. Let us give some properties of a factor set.

\begin{proposition}\label{pro211}
Let $e:0\longrightarrow M\longrightarrow K\stackrel{\pi}{\longrightarrow}L\longrightarrow 0$ be a central extension , $\mu$  a section of $\pi$ and $f$ a factor set defined by $\mu$. Then we have:
\begin{itemize}
\item[(i)]
 $f$ is an even bilinear map from $L\times L$ to $M$,
\item[(ii)]
 $f(x, y)=-(-1)^{|x||y|}f(y, x)$ for all $x, y\in L$,
\item[(iii)]
$(-1)^{|x||z|}f([x, y], z)+(-1)^{|y||x|}f([y, z], x)+(-1)^{|z||y|}f([z, x], y)=0$ for all $x, y, z \in L$.
 \end{itemize}
\end{proposition}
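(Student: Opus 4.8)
The plan is to verify the three assertions directly from the definition $f(x,y)=[\mu(x),\mu(y)]-\mu([x,y])$, using only that $\mu$ is an even linear section of $\pi$ and that $M=\ker\pi$ is central in $K$. First I would confirm that $f$ actually takes values in $M$: applying the even homomorphism $\pi$ and using $\pi\circ\mu=\mathrm{id}_{L}$ gives $\pi(f(x,y))=[x,y]-[x,y]=0$, so $f(x,y)\in\ker\pi=M$.

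For (i), bilinearity is immediate from the linearity of $\mu$ and the bilinearity of the bracket of $K$; evenness follows because $\mu$ preserves degree, so for homogeneous $x,y$ both $[\mu(x),\mu(y)]$ and $\mu([x,y])$ are homogeneous of degree $|x|+|y|$, hence so is $f(x,y)$. For (ii), I would apply skew-supersymmetry of the bracket in $K$ (noting $|\mu(x)|=|x|$) to get $[\mu(x),\mu(y)]=-(-1)^{|x||y|}[\mu(y),\mu(x)]$, and skew-supersymmetry in $L$ together with the linearity of $\mu$ to get $\mu([x,y])=-(-1)^{|x||y|}\mu([y,x])$; subtracting these two relations yields $f(x,y)=-(-1)^{|x||y|}f(y,x)$.

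The only computation with any substance is (iii). Here I would start from the super-Jacobi identity of $K$ applied to $\mu(x),\mu(y),\mu(z)$,
\[
(-1)^{|x||z|}[[\mu(x),\mu(y)],\mu(z)]+(-1)^{|y||x|}[[\mu(y),\mu(z)],\mu(x)]+(-1)^{|z||y|}[[\mu(z),\mu(x)],\mu(y)]=0,
\]
and use the key observation that $[\mu(x),\mu(y)]=f(x,y)+\mu([x,y])$ with $f(x,y)\in M\subseteq\mathrm{Z}(K)$, so that $[f(x,y),\mu(z)]=0$ and therefore $[[\mu(x),\mu(y)],\mu(z)]=[\mu([x,y]),\mu(z)]=f([x,y],z)+\mu([[x,y],z])$, and likewise for the two cyclic permutations. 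Substituting these three expressions into the displayed identity, the terms of the form $\mu(\cdots)$ collect into $\mu$ applied to the left-hand side of the super-Jacobi identity of $L$, hence vanish, and what survives is exactly the identity claimed in (iii). I expect the only mild nuisance to be keeping the sign conventions of the super-Jacobi identities for $L$ and for $K$ consistent with the signs written in the statement; centrality of $M$ is what does the actual work, so no genuine obstacle arises.
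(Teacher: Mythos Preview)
Your proposal is correct and follows essentially the same route as the paper's proof: both verify that $f$ lands in $\ker\pi=M$ via $\pi\circ\mu=\mathrm{id}_{L}$, handle (i) and (ii) by evenness of $\mu$ and the super skew-symmetry of the brackets in $K$ and $L$, and obtain (iii) by combining the super-Jacobi identities in $K$ (for $\mu(x),\mu(y),\mu(z)$) and in $L$ with the centrality of $f(x,y)$ to discard $[f(x,y),\mu(z)]$. The only cosmetic difference is the order of the bookkeeping in (iii): the paper first records $\mu$ applied to the Jacobi identity in $L$ and then shows the cyclic sum of $[\mu([x,y]),\mu(z)]$ vanishes, whereas you substitute directly into the Jacobi identity in $K$; the content is identical.
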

\begin{proof}
$(\mathrm{i})$ Clearly, $(\pi\circ f)(x, y)=\pi([\mu(x), \mu(y)])-\pi(\mu([x,y]))$, where $x, y \in L $. Since $\pi\circ\mu=\mathrm{id}_{L}$, we have $(\pi\circ f)(x, y)=0$ and $f(x,y)\in \ker\pi=M$. Note that $\mu$ is an even linear map. $(\mathrm{i})$ holds.

$(\mathrm{ii})$  Note that $\mu$ is even. We have \begin{align*}
                                   f(x, y)=  &[\mu(x), \mu(y)]-\mu([x,y])  \\
                                    = & -(-1)^{|\mu(x)||\mu(y)|}[\mu(y),\mu(x)]-(-(-1)^{|x||y|}\mu([y,x]))\\
                                    = & -(-1)^{|x||y|}f(y, x).
                                  \end{align*}

$(\mathrm{iii})$
According to the super Jacobi identity in $L$, we have
\begin{equation*}
    (-1)^{|x||z|}\mu([[x, y], z])+(-1)^{|y||x|}\mu([[y, z], x])+(-1)^{|z||y|}\mu([[z, x], y])=0.
\end{equation*}
Since $[\mu([x, y]), \mu(z)]=[[\mu(x), \mu(y)]-f(x, y), \mu(z)]$ and $f(x, y)\in M\subseteq \mathrm{Z}(K)$, we have
\begin{align*}
   & (-1)^{|x||z|}[\mu([x, y]), \mu(z)]+(-1)^{|y||x|}[\mu([y, z]), \mu(x)]+(-1)^{|z||y|}[\mu([z, x]), \mu(y)] \\
  =& (-1)^{|x||z|}[[\mu(x), \mu(y)], \mu(z)]+(-1)^{|y||x|}[[\mu(y), \mu(z)], \mu(x)]+(-1)^{|z||y|}[[\mu(z, \mu(x)], \mu(y)]\\
  =& 0.
\end{align*}
This implies that $(\mathrm{iii})$ holds.
\end{proof}

Let $e: 0\longrightarrow M\longrightarrow K\stackrel{\pi}{\longrightarrow}L\longrightarrow 0$ be a central extension. Suppose $\mu$ and $\mu'$  are sections of $\pi$. Let $f$ and $g$ be factor sets defined by $\mu$ and $\mu'$, respectively. Then we have factor sets for the central extension $e$ differ by a linear map $(\mu-\mu'): L\longrightarrow M$. Suppose we are given two central extensions $e_{1}, e_{2}$ such that $e_{1}$ is equivalent to $e_{2}$, let $\rho: K_{1}\longrightarrow K_{2}$ be a Lie superalgebra homomorphism, $\mu_{1}$ and $\mu_{2}$ two sections of $\pi_{1}$ and $\pi_{2}$,
$f_{1}$ and $f_{2}$ factor sets defined by $\mu_{1}$ and $\mu_{2}$. Clearly, $\rho\circ\mu_{1}$ is also a section of $\pi_{2}$. Then $\rho\circ f_{1}(x, y)-f_{2}(x, y)=-(\rho\circ\mu_{1}-\mu_{2})([x, y])$. Since $\rho|_{M}=\mathrm{id}_{M}$, we have $\rho\circ f_{1}=f_{1}$. A
direct calculation shows that factor sets for equivalent central extensions $e_{1}$ and $e_{2}$ differ by a linear map $(\rho\circ\mu_{1}-\mu_{2}): L\longrightarrow M$.

Analogous to the group case \cite[Proposition 10.3.5]{21}, we have the following lemma.
\begin{lemma}\label{lem212}Let $0\longrightarrow R\stackrel{i}{\longrightarrow} F\stackrel{\pi}{\longrightarrow}L\longrightarrow 0$ be a free presentation of a Lie superalgebra $L$
and
$0\longrightarrow M\stackrel{i}{\longrightarrow} K\stackrel{\nu}{\longrightarrow}\overline{L} \longrightarrow 0$ an extension of a Lie superalgebra $\overline{L}$. Let $\alpha: L\longrightarrow \overline{L}$ be a Lie superalgebra homomorphism. Then there exists a Lie superalgebra homomorphism
$\bar{\beta}: F^{2}/[R, F]\longrightarrow K^{2}/[M, K]$ induced by $\beta: F\longrightarrow K$ such that  the following diagram is commutative:
$$\xymatrix{0\ar[r]&\dfrac{F^{2}\cap R}{[R, F]}\ar[d]_{\bar{\beta}_{1}}\ar[r]^{\bar{i}}&\dfrac{F^{2}}{[R,  F]}\ar[d]_{\bar{\beta}}\ar[r]^{\bar{\pi}}&L^{2}\ar[d]_{\bar{\alpha}}\ar[r]&0\\
0\ar[r]&\dfrac{K^{2}\cap M}{[M, K]}\ar[r]^{\bar{i}}&\dfrac{K^{2}}{[M, K]}\ar[r]^{\bar{\nu}}&\overline{L}^{2}\ar[r]&0,}$$
where the maps $\bar{i}$ and $\bar{\pi}$ in the rows are the induces maps, while $\bar{\beta}_{1}$ and $\bar{\alpha}$ are the restrictions of $\beta$
and $\alpha$, respectively. Moreover, if $\bar{\gamma}: F^{2}/[R, F]\longrightarrow K^{2}/[M, K]$ is also a Lie superalgebra homomorphism induced by $\gamma: F\longrightarrow K$, then the homomorphism $\bar{\gamma}$ is the same as $\bar{\beta}$.
\end{lemma}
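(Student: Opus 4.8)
The plan is to imitate the classical group-theoretic argument: use that $F$ is free to lift $\alpha\circ\pi$ along the surjection $\nu$, and then deduce every required inclusion from the single fact that a Lie superalgebra homomorphism carries brackets to brackets. First I would build $\beta$. Since $F$ is free and $\nu\colon K\to\overline{L}$ is surjective, pick for each member of a free generating set of $F$ a $\nu$-preimage of its image under $\alpha\circ\pi$; by the universal property of $F$ this choice extends to an even Lie superalgebra homomorphism $\beta\colon F\to K$, and since $\nu\circ\beta$ and $\alpha\circ\pi$ are homomorphisms $F\to\overline{L}$ agreeing on the generators we get $\nu\circ\beta=\alpha\circ\pi$. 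From this identity I read off: $\beta(R)\subseteq\ker\nu=M$ (as $\nu\beta(R)=\alpha\pi(R)=0$); $\beta(F^{2})=\beta([F,F])\subseteq[K,K]=K^{2}$; $\beta([R,F])\subseteq[\beta(R),\beta(F)]\subseteq[M,K]$; and hence $\beta(F^{2}\cap R)\subseteq K^{2}\cap M$.

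Next I would pass to quotients and check the diagram. Because $[R,F]\subseteq F^{2}\cap R$ and $[M,K]\subseteq K^{2}\cap M$, the inclusions above show that $\beta$ descends to a Lie superalgebra homomorphism $\bar\beta\colon F^{2}/[R,F]\to K^{2}/[M,K]$ and to its restriction $\bar\beta_{1}\colon(F^{2}\cap R)/[R,F]\to(K^{2}\cap M)/[M,K]$. The surjective homomorphism $\pi$ (kernel $R$) satisfies $\pi(F^{2})=L^{2}$ and $\pi([R,F])=0$, so it induces a surjection $\bar\pi\colon F^{2}/[R,F]\to L^{2}$ whose kernel is precisely $(F^{2}\cap R)/[R,F]$; the identical argument applied to $\nu$ gives exactness of the bottom row. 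The left square commutes since $\bar i$ is induced by an inclusion and $\bar\beta,\bar\beta_{1}$ are both induced by $\beta$; for the right square, any $x\in F^{2}$ gives $\bar\nu(\bar\beta(x+[R,F]))=\nu(\beta(x))=\alpha(\pi(x))=\bar\alpha(\bar\pi(x+[R,F]))$, using $\nu\circ\beta=\alpha\circ\pi$.

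Finally I would prove uniqueness. Let $\gamma\colon F\to K$ be another homomorphism subject to the same compatibility, so that $\nu\circ\gamma=\alpha\circ\pi$, and set $\delta=\beta-\gamma$, an even $\mathbb{F}$-linear map $F\to K$ with $\nu\circ\delta=0$, hence $\delta(F)\subseteq M$. For homogeneous $a,b\in F$, bilinearity of the bracket gives
\begin{align*}
\beta([a,b])-\gamma([a,b])&=[\beta(a),\beta(b)]-[\gamma(a),\gamma(b)]\\
&=[\gamma(a),\delta(b)]+[\delta(a),\gamma(b)]+[\delta(a),\delta(b)],
\end{align*}
and each summand lies in $[M,K]$ since $\delta(a),\delta(b)\in M$; as $F^{2}$ is spanned by such brackets $[a,b]$, we conclude $\beta(x)-\gamma(x)\in[M,K]$ for all $x\in F^{2}$, i.e. $\bar\beta=\bar\gamma$. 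I do not expect a genuine obstacle: once $\beta$ is produced the rest is pure diagram chasing. The only points needing a little care are keeping inclusion-induced and quotient-induced maps straight while verifying the two squares, and noting that the uniqueness step never uses centrality of $M$ — it needs only $[M,K]\subseteq K^{2}$, which always holds — so the statement really concerns arbitrary extensions $K$ of $\overline{L}$, not merely central ones.
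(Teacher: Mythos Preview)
Your proof is correct and follows essentially the same route as the paper: lift $\alpha\circ\pi$ along $\nu$ using freeness of $F$ (the paper outsources this step to \cite[Theorem~2.5]{20}), check the inclusions $\beta(R)\subseteq M$, $\beta(F^{2})\subseteq K^{2}$, $\beta([R,F])\subseteq[M,K]$ to descend to quotients, and for uniqueness observe that any two lifts differ by something landing in $M$, so on brackets they differ by an element of $[M,K]$. The only organizational difference is that the paper phrases the uniqueness argument in the quotient $K/[M,K]$ (using that $M/[M,K]$ is automatically central there), whereas you compute $\beta([a,b])-\gamma([a,b])$ directly in $K$; the content is the same.
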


\begin{proof}
By \cite[Theorem 2.5]{20}, we can get a Lie superalgebra homomorphism $\bar{\beta}: F^{2}/[R, F]\longrightarrow K^{2}/[M, K]$ induced by $\beta: F\longrightarrow K$ such that the following diagram is commutative:
$$\xymatrix{0\ar[r]&\dfrac{R}{[R, F]}\ar[d]_{\bar{\beta}_{1}}\ar[r]^{\bar{i}}&\dfrac{F}{[R,  F]}\ar[d]_{\bar{\beta}}\ar[r]^{\bar{\pi}}&L\ar[d]_{\alpha}\ar[r]&0\\
0\ar[r]&\dfrac{K}{[M, K]}\ar[r]^{\bar{i}}&\dfrac{K}{[M, K]}\ar[r]^{\bar{\nu}}&\overline{L}\ar[r]&0,}$$
where $\bar{\beta}_{1}$ is the restrictions of $\bar{\beta}$. Since $\pi(F^{2})\subseteq L^{2}$ and $\nu(K^{2})\subseteq \overline{L}^{2}$
, we have $\bar{\beta}(F^{2}/[R, F])\subseteq K^{2}/[M, K]$ and the diagram in the statement of the lemma is commutative.

If $\bar{\gamma}: F^{2}/[R, F]\longrightarrow K^{2}/[M, K]$ is also a Lie superalgebra homomorphism induced by $\gamma: F\longrightarrow K$ such that the following diagram is commutative:
$$\xymatrix{0\ar[r]&\dfrac{R}{[R, F]}\ar[d]_{\bar{\gamma}_{1}}\ar[r]^{\bar{i}}&\dfrac{F}{[R,  F]}\ar[d]_{\bar{\gamma}}\ar[r]^{\bar{\pi}}&L\ar[d]_{\alpha}\ar[r]&0\\
0\ar[r]&\dfrac{K}{[M, K]}\ar[r]^{\bar{i}}&\dfrac{K}{[M, K]}\ar[r]^{\bar{\nu}}&\overline{L}\ar[r]&0,}$$
where $\bar{\gamma}_{1}$ is the restrictions of $\bar{\gamma}$. Let $\bar{x}\in F/[R,  F]$. Then $\bar{\nu}(\bar{\gamma}(\bar{x}))=\alpha(\bar{\pi}(\bar{x}))=\bar{\nu}(\bar{\beta}(\bar{x}))$. Hence, $\bar{\nu}(\bar{\gamma}(\bar{x})-\bar{\beta}(\bar{x}))=0$ and $\bar{\gamma}(\bar{x})=\bar{\beta}(\bar{x})+\bar{m}_{x}$ for some $\bar{m}_{x}\in M/[M, K]$. Since $M/[M, K]$ is contained in the center of $K/[M, K]$, we have $\bar{\gamma}([\bar{x}, \bar{y}])=[\bar{\gamma}(\bar{x}), \bar{\gamma}(\bar{y})]=[\bar{\beta}(\bar{x})+\bar{m}_{x}, \bar{\beta}(\bar{y})+\bar{m}_{y}]=[\bar{\beta}(\bar{x}), \bar{\beta}(\bar{y})]=\bar{\beta}([\bar{x}, \bar{y}])$. This implies that $\bar{\beta}=\bar{\gamma}$ when restricted to $F^{2}/[R, F]$, and so also
to $(F^{2}\cap R)/[R, F]$.
\end{proof}

\begin{proposition}\label{pro213}
Let $0\longrightarrow R\stackrel{i}{\longrightarrow} F\stackrel{\pi}{\longrightarrow}L\longrightarrow 0$ and $0\longrightarrow R'\stackrel{i'}{\longrightarrow} F'\stackrel{\pi'}{\longrightarrow}L\longrightarrow 0$ be two free presentations of a Lie superalgebra $L$. Then the Lie superalgebras $(F^{2}\cap R)/[R, F]$ and $(F'^{2}\cap R')/[R', F']$ are naturally isomorphic.
\end{proposition}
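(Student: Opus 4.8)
The plan is to use the freeness of $F$ and $F'$ to construct lifting homomorphisms in both directions and then invoke the uniqueness clause of Lemma~\ref{lem212} to see that the induced maps on the candidate multipliers are mutually inverse.

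First, since $F$ is free and $\pi'\colon F'\to L$ is surjective, there is a Lie superalgebra homomorphism $\beta\colon F\to F'$ with $\pi'\circ\beta=\pi$. Then $\beta(R)\subseteq R'$ (because $\pi'\beta(R)=\pi(R)=0$), $\beta(F^{2})\subseteq F'^{2}$, and $\beta([R,F])\subseteq[R',F']$, so $\beta$ carries $F^{2}\cap R$ into $F'^{2}\cap R'$ and $[R,F]$ into $[R',F']$; hence it induces a homomorphism $\bar\beta_{1}\colon (F^{2}\cap R)/[R,F]\to (F'^{2}\cap R')/[R',F']$. By Lemma~\ref{lem212}, applied with the extension $0\to R'\to F'\to L\to 0$ and $\alpha=\mathrm{id}_{L}$, this $\bar\beta_{1}$ is the restriction of the induced map $\bar\beta\colon F^{2}/[R,F]\to F'^{2}/[R',F']$, and the last assertion of that lemma shows $\bar\beta_{1}$ does not depend on the chosen lift $\beta$. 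Symmetrically, a lift $\gamma\colon F'\to F$ with $\pi\circ\gamma=\pi'$ yields a well-defined homomorphism $\bar\gamma_{1}\colon (F'^{2}\cap R')/[R',F']\to (F^{2}\cap R)/[R,F]$.

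Next I would check that $\bar\beta_{1}$ and $\bar\gamma_{1}$ are mutually inverse. The composite $\gamma\circ\beta\colon F\to F$ satisfies $\pi\circ(\gamma\circ\beta)=\pi'\circ\beta=\pi$, so it is a lift of $\mathrm{id}_{L}$ through the free presentation $0\to R\to F\to L\to 0$, and so is $\mathrm{id}_{F}$. Since the endomorphism of $F^{2}/[R,F]$ induced by a lift is simply ``apply the lift, then pass to the quotient,'' the map induced by $\gamma\circ\beta$ equals $\bar\gamma\circ\bar\beta$, while the map induced by $\mathrm{id}_{F}$ is the identity. Applying the uniqueness clause of Lemma~\ref{lem212} (with $K=F$, $M=R$, $\alpha=\mathrm{id}_{L}$), these two coincide, so $\bar\gamma\circ\bar\beta=\mathrm{id}$ on $F^{2}/[R,F]$; restricting to $(F^{2}\cap R)/[R,F]$ gives $\bar\gamma_{1}\circ\bar\beta_{1}=\mathrm{id}$. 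Interchanging the roles of the two presentations gives $\bar\beta_{1}\circ\bar\gamma_{1}=\mathrm{id}$. Hence $\bar\beta_{1}$ is a Lie superalgebra isomorphism, and it is natural because, by Lemma~\ref{lem212}, it is independent of the choices of $\beta$ and $\gamma$.

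The argument is essentially formal once Lemma~\ref{lem212} is available; the one point needing attention is the invocation of its uniqueness clause, that is, recognizing that $\gamma\circ\beta$ and $\mathrm{id}_{F}$ are both admissible lifts covering $\mathrm{id}_{L}$ and therefore induce the same endomorphism of $F^{2}/[R,F]$. The remaining verifications—that $\beta$ sends $F^{2}\cap R$ into $F'^{2}\cap R'$ and $[R,F]$ into $[R',F']$, and that passing to quotients is compatible with composition—are routine and involve no signs, since all maps in question are even, so the super setting adds nothing beyond the classical Lie algebra case.
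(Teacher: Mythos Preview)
Your proof is correct and follows essentially the same route as the paper's: use Lemma~\ref{lem212} (with $\alpha=\mathrm{id}_{L}$) to obtain induced maps in both directions, then apply the uniqueness clause of that lemma to the two lifts $\gamma\circ\beta$ and $\mathrm{id}_{F}$ of $\mathrm{id}_{L}$ to conclude that the composites are the identity. The only difference is cosmetic---you spell out explicitly why $\beta$ preserves the relevant subspaces, whereas the paper absorbs this into the statement of Lemma~\ref{lem212}.
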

\begin{proof}
According to Lemma \ref{lem212}, for the identity map $\mathrm{id}_{L}$, we can get a unique Lie superalgebra homomorphism $\bar{\rho}: (F^{2}\cap R)/[R, F]\longrightarrow (F'^{2}\cap R')/[R', F']$ which is induced by a Lie superalgebra homomorphism $\rho: F\longrightarrow F'$ and also a unique Lie superalgebra homomorphism $\bar{\rho}':(F'^{2}\cap R')/[R', F']\longrightarrow (F^{2}\cap R)/[R, F]$ which is induced by a Lie superalgebra homomorphism $\rho': F'\longrightarrow F$. Thus, $\rho'\circ\rho$ and $\mathrm{id}_{F}$ are both Lie superalgebra homomorphism
from $F$ to itself and so they induce the same Lie superalgebra homomorphism from $(F^{2}\cap R)/[R, F]$ to itself. This implies that $\bar{\rho}'\circ \bar{\rho}= \mathrm{id}_{F}$. Similarly, we have $\bar{\rho}\circ\bar{\rho}'=\mathrm{id}_{F'}$.
This shows $\bar{\rho}$ and $\bar{\rho}$ are isomorphisms.
\end{proof}
 By Proposition \ref{pro213} the Lie superalgebra $(F^{2}\cap R)/[R, F]$ is independent of the choice of the free presentation, we have right to have the following definition.

\begin{definition}\label{def214}
Let $L$ be a Lie superalgebra and
$$0\longrightarrow R\longrightarrow F\stackrel{\pi}{\longrightarrow}L\longrightarrow 0$$ a free presentation of $L$. Then we call the factor Lie superalgebra $(F^{2}\cap R)/[R, F]$ the Schur multiplier of $L$, denoted by $\mathcal{M}(L)$.
\end{definition}

Next we introduce some basic concepts of the Lie superalgebra cohomplogy \cite{22, 23}.

Let $\mathbb{F}$ be a trivial $L$-module and $C^{0}(L, \mathbb{F})=\mathbb{F}$. For $n\geq 1$, let $C^{n}(L, \mathbb{F})$ be the $\mathbb{Z}_{2}$-graded vector space of all $n$-linear super skew symmetry functions $f:L\times \ldots\times L\longrightarrow \mathbb{F}$ such that $$f(x_{1}, \ldots, x_{i}, x_{i+1}, \ldots, x_{n})=-(-1)^{|x_{i}||x_{i+1}|}f(x_{1}, \ldots, x_{i+1}, x_{i}, \ldots, x_{n})$$
for $x_{1}, \ldots, x_{n}\in L$.

We define an $L$-module structure on $C^{n}(L, \mathbb{F})$ as follows:
\begin{align*}
                                     &(x\cdot f)(x_{1}, \ldots, x_{n})  \\
                                   = &-\sum\limits_{i=1}^{n}(-1)^{|x|(|f|+|x_{1}|+\ldots+|x_{i-1}|)}f(x_{1}, \ldots, x_{i-1}, [x, x_{i}], x_{i+1}, \ldots, x_{n})
 \end{align*}
for $x, x_{1}, \ldots, x_{n}\in L$ and $f\in C^{n}(L, \mathbb{F})$.

We define a differential operator $\delta: C^{n}(L, \mathbb{F})\longrightarrow C^{n+1}(L, \mathbb{F})$ as follows:
\begin{align*}
   & (\delta f)(x_{1}, \ldots, x_{n+1}) \\
  = & \sum\limits_{i<j}(-1)^{j+|x_{j}|(|x_{i+1}|+\ldots+|x_{j-1}|)}f(x_{1}, \ldots, x_{i-1}, [x_{i}, x_{j}], x_{i+1}, \ldots, \hat{x}_{j}, \ldots, x_{n+1})
\end{align*}
for $x_{1}, \ldots, x_{n+1}\in L$ and $f\in C^{n}(L, \mathbb{F})$, where the sign $\hat{}$ means that the element under it is omitted.

Let $$\mathrm{Z}^{n}(L, \mathbb{F})=\ker \delta=\{f\in C^{n}(L, \mathbb{F})\mid \delta f=0\},$$
$$\mathrm{B}^{n}(L, \mathbb{F})=\mathrm{Im}\delta=\{\delta(f)\mid f \in C^{n-1}(L, \mathbb{F})\}.$$ One can check that $\delta^{2}=0$. This implies that $\mathrm{B}^{n}(L, \mathbb{F})\subseteq \mathrm{Z}^{n}(L, \mathbb{F})$. Define $\mathrm{H}^{n}(L, \mathbb{F})=
\mathrm{Z}^{n}(L, \mathbb{F})/\mathrm{B}^{n}(L, \mathbb{F})$. The space $\mathrm{H}^{n}(L, \mathbb{F})$ is called the Lie superalgebra cohomology group with coefficient in $\mathbb{F}$. For $f\in \mathrm{Z}^{n}(L, \mathbb{F})$, we denote by $\bar{f}$ its image in $\mathrm{H}^{n}(L, \mathbb{F})$ under the canonical map.
We are interested in the case $n=2$.We have
\begin{align*}
 & -(-1)^{|x||z|}\delta f(x, y, z) \\
= & (-1)^{|x||z|}f([x, y], z)+(-1)^{|y||x|}f([y, z], x)+(-1)^{|z||y|}f([z, x], y)
\end{align*}
for $x, y, z\in L$ and $f\in C^{2}(L, \mathbb{F})$. Then for $f\in C^{2}(L, \mathbb{F})$, we have $f\in \mathrm{Z}^{2}(L, \mathbb{F})$ if and only if $(-1)^{|x||z|}f([x, y], z)+(-1)^{|y||x|}f([y, z], x)+(-1)^{|z||y|}f([z, x], y)=0$ and $f\in \mathrm{B}^{2}(L, \mathbb{F})$ if and only if there exists a linear map $\sigma: L \longrightarrow \mathbb{F}$ such that $f(x, y)=-\sigma([x, y])$. By Proposition \ref{pro211}, a factor set is a 2-cocycle. Futhermore,
factor sets for equivalent central extension rise to an element of the second cohomlogy group and this element is independent of the choice of the section. According to \cite[Theorem 26.2]{21}, the elements of $\mathrm{H}^{2}(L, \mathbb{F})$ are in 1-1 correspondence with equivalence classes of central extensions of $L$ by $M$.

\section{Main results}
Let $L$ be a Lie superalgebra and $H$ a central subalgebra of $L$. Then
$$0\longrightarrow H\stackrel{i}{\longrightarrow} L\stackrel{\pi}{\longrightarrow}L/H\longrightarrow 0$$
 be a central extension. Let $\mu$ be a section of $\pi$ and $\mathbb{F}$ a trivial $L$-module. We view $H$, $L$, $L/H$ as $L$-modules by adjoint representation and
we write $\mathrm{Hom}(M, N)$ for the set of $L$-module homomorphisms from $M$ to $N$.

Define
 \begin{align*}
                 \mathrm{Inf}:\mathrm{Hom}(L/H, \mathbb{F}) & \longrightarrow \mathrm{Hom}(L, \mathbb{F}) \\
                f & \longmapsto f\circ\pi.
\end{align*}
Since $\pi$ is a Lie superalgebra homomorphism, we have $f\circ\pi\in \mathrm{Hom}(L, \mathbb{F})$. Clearly, the map $\mathrm{Inf}$ is an even linear map.

Define
\begin{align*}
                 \mathrm{Res}:\mathrm{Hom}(L, \mathbb{F}) & \longrightarrow \mathrm{Hom}(H, \mathbb{F}) \\
                g & \longmapsto g\circ i.
               \end{align*}
Note that $i$ is a Lie superalgebra homomorphism. Clearly, the map $\mathrm{Res}$ is also an even linear map.

Define
\begin{align*}
                 \mathrm{Tra}:\mathrm{Hom}(H, \mathbb{F}) & \longrightarrow \mathrm{H}^{2}(L/H, \mathbb{F}) \\
               \alpha & \longmapsto \overline{\alpha\circ f}
               \end{align*}
where $f$ is a factor set defined by $\mu$. Let $\alpha\in \mathrm{Hom}(H, \mathbb{F})$. Since $f\in \mathrm{Z}^{2}(L/H, H)$, we have $\alpha\circ f\in \mathrm{Z}^{2}(L/H, \mathbb{F})$. Let $\mu'$ be also a section of $\pi$ and $g$ a factor set defined by $\mu'$. Recall that factor sets for the same central extension differ by a linear map, we have $\alpha\circ f-\alpha\circ g =-\alpha(\mu-\mu')([\bar{x}, \bar{y}])$. It follows $\alpha\circ f-\alpha\circ g\in \mathrm{B}^{2}(L/H, \mathbb{F})$ and $\overline{\alpha\circ f}=\overline{\alpha\circ g}$ in $H^{2}(L/H, \mathbb{F})$. Define $\mathrm{Tra}$ by $\mathrm{Tra}(\alpha)=\overline{\alpha\circ f}$.
Since $\mathrm{Tra}(k\alpha+\alpha')=\overline{(k\alpha+\alpha')\circ f}
=\overline{k\alpha\circ f}+\overline{\alpha'\circ f}=k\mathrm{Tra}(\alpha)+\mathrm{Tra}(\alpha')$ for $\alpha, \alpha'\in \mathrm{Hom}(H, \mathbb{F})$ and $k\in \mathbb{F}$, we have  $\mathrm{Tra}$ is a linear map.

Define
\begin{align*}
                 \mathrm{Inf}:\mathrm{H}^{2}(L/H, \mathbb{F}) & \longrightarrow \mathrm{H}^{2}(L, \mathbb{F}) \\
               \bar{\beta}& \longmapsto \bar{\beta'}.
               \end{align*}
Let $x, y\in L, \beta\in \mathrm{Z}^{2}(L/H, \mathbb{F})$. Define $\beta'(x,y)=\beta(\pi(x), \pi(y))$. Then
\begin{align*}
   &(-1)^{|x||z|}\beta'([x, y], z)+(-1)^{|y||x|}\beta'([y, z], x)+(-1)^{|z||y|}\beta'([z, x], y) \\
=&(-1)^{|x||z|}\beta(\pi([x, y]), \pi(z))+(-1)^{|y||x|}\beta(\pi([y, z]), \pi(x))+(-1)^{|z||y|}\beta(\pi([z, x]), \pi(y))\\
=&(-1)^{|x||z|}\beta([\pi(x), \pi(y)], \pi(z))+(-1)^{|y||x|}\beta([\pi(y), \pi(z)], \pi(x))+(-1)^{|z||y|}
\beta([\pi(z), \pi(x))], \pi(y)).
\end{align*}
Since $|x|=|\pi(x)|, |y|=|\pi(y)|, |z|=|\pi(z)|$ and $\beta\in \mathrm{Z}^{2}(L/H, \mathbb{F})$, we have $\beta'\in \mathrm{Z}^{2}(L, \mathbb{F})$.

Define a map $I: \mathrm{Z}^{2}(L/H, \mathbb{F})\longrightarrow\mathrm{ Z}^{2}(L, \mathbb{F})$ by $I(\beta)=\beta'$.
Suppose that $\beta\in \mathrm{B}^{2}(L/H, \mathbb{F})$. Then there exists a linear function $f:L/H\longrightarrow \mathbb{F}$ such that $\beta(\bar{x}, \bar{y})=-f([\bar{x}, \bar{y}])$ where $\bar{x}=\pi(x), \bar{y}=\pi(y)$.
 Then $$I(\beta)(\bar{x}, \bar{y})=\beta'(x, y)=\beta(\pi(x), \pi(y))=-f([\pi(x), \pi(y)])=-f\circ\pi([x,y])\in \mathrm{B}^{2}(L, \mathbb{F}).$$ We have
$I(\mathrm{B}^{2}(L/H, \mathbb{F})\subseteq \mathrm{B}^{2}(L, \mathbb{F})$. Thus, the map $I: \mathrm{Z}^{2}(L/H, \mathbb{F})\longrightarrow \mathrm{Z}^{2}(L, \mathbb{F})$ can induce a map $\mathrm{H}^{2}(L/H, \mathbb{F})\longrightarrow \mathrm{H}^{2}(L, \mathbb{F})$ defined by $\mathrm{Inf}(\bar{\beta})=\bar{\beta'}$.
Since
\begin{align*}
  I(b\beta+\tilde{\beta})(\bar{x}, \bar{y})= & (b\beta+\tilde{\beta})'(x, y) \\
  = &(b\beta+\tilde{\beta})(\pi(x), \pi(y))\\
= &b\beta(\pi(x), \pi(y))+
\tilde{\beta}(\pi(x), \pi(y))\\
= &b\beta'(x, y)+\tilde{\beta}'(x, y)\\
= &(bI(\beta)(\bar{x}, \bar{y})+I(\tilde{\beta}))(\bar{x}, \bar{y}),
\end{align*}
where $\bar{x}=\pi(x), \bar{y}=\pi(y), \beta, \tilde{\beta}\in \mathrm{Z}^{2}(L/H, \mathbb{F})$, the map $I$ is a linear map and so is $\mathrm{Inf}$.

\begin{theorem}\label{the311}(5-sequence of cohomology)
\ Let $L$ be a Lie superalgebra and $H$ a central subalgebra of $L$.
 Then
$$0\longrightarrow \mathrm{Hom}(L/H, \mathbb{F})\stackrel{\mathrm{Inf}}{\longrightarrow}\mathrm{Hom}(L, \mathbb{F})\stackrel{\mathrm{Res}}{\longrightarrow}\mathrm{Hom}(H, \mathbb{F})\stackrel{\mathrm{Tra}}{\longrightarrow}H^{2}(L/H, \mathbb{F})\stackrel{\mathrm{Inf}}{\longrightarrow}H^{2}(L, \mathbb{F})$$
is exact.
\end{theorem}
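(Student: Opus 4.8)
The plan is to verify exactness separately at each of the four interior nodes, using throughout two simplifications coming from the hypotheses: since $\mathbb{F}$ is a trivial module, a map in $\mathrm{Hom}(L,\mathbb{F})$ or $\mathrm{Hom}(L/H,\mathbb{F})$ is nothing but a linear functional annihilating the derived subalgebra, and since $H$ is central, \emph{every} linear functional $H\to\mathbb{F}$ already lies in $\mathrm{Hom}(H,\mathbb{F})$. I would fix once and for all the vector-space splitting $L=\mu(L/H)\oplus H$ afforded by the section $\mu$, together with the even linear map $\rho\colon L\to H$, $\rho(x)=x-\mu(\pi(x))$; the computational workhorse, valid for all $x,y\in L$ by centrality of $H$ and the definition of the factor set, is $[x,y]=f(\pi(x),\pi(y))+\mu([\pi(x),\pi(y)])$. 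Since $\pi,\mu,i,f$ are all even and $\mathbb{F}$ sits in degree $0$, every cochain in sight is even, so parity never obstructs the constructions below.

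Exactness at $\mathrm{Hom}(L/H,\mathbb{F})$ is immediate: $\pi$ is surjective, so $f\circ\pi=0$ forces $f=0$. At $\mathrm{Hom}(L,\mathbb{F})$: $\mathrm{Res}\circ\mathrm{Inf}=0$ because $\pi\circ i=0$; conversely, if $g\colon L\to\mathbb{F}$ kills $L^{2}$ and $g|_{H}=0$, then $g=\bar g\circ\pi$ for a functional $\bar g$ on $L/H$ which still kills $(L/H)^{2}=\pi(L^{2})$, whence $\bar g\in\mathrm{Hom}(L/H,\mathbb{F})$ and $\mathrm{Inf}(\bar g)=g$. At $\mathrm{Hom}(H,\mathbb{F})$: for $g\in\mathrm{Hom}(L,\mathbb{F})$ one computes $(g|_{H}\circ f)(\bar x,\bar y)=g([\mu(\bar x),\mu(\bar y)])-g(\mu([\bar x,\bar y]))=-(g\circ\mu)([\bar x,\bar y])$ because $g$ vanishes on $L^{2}$, so $\mathrm{Tra}\circ\mathrm{Res}=0$; conversely, if $\alpha\in\mathrm{Hom}(H,\mathbb{F})$ satisfies $(\alpha\circ f)(\bar x,\bar y)=-\sigma([\bar x,\bar y])$ for a linear $\sigma\colon L/H\to\mathbb{F}$, I would set $g(\mu(\bar x)+h):=\sigma(\bar x)+\alpha(h)$ and verify, via the workhorse identity, that $g([x,y])=\alpha(f(\pi(x),\pi(y)))+\sigma([\pi(x),\pi(y)])=0$, so $g\in\mathrm{Hom}(L,\mathbb{F})$ and $\mathrm{Res}(g)=\alpha$.

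The steps carrying actual content are those at $\mathrm{Hom}(H,\mathbb{F})$ just described and, above all, at $H^{2}(L/H,\mathbb{F})$. For $\mathrm{Inf}\circ\mathrm{Tra}=0$, given $\alpha\in\mathrm{Hom}(H,\mathbb{F})$ I would show that the $2$-cocycle $(\alpha\circ f)'$ representing $\mathrm{Inf}(\mathrm{Tra}(\alpha))$ is the coboundary of $-\alpha\circ\rho$: by the workhorse identity, $(\alpha\circ\rho)([x,y])=\alpha\big([x,y]-\mu([\pi(x),\pi(y)])\big)=\alpha(f(\pi(x),\pi(y)))=(\alpha\circ f)'(x,y)$. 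For the reverse inclusion, take $\bar\beta\in H^{2}(L/H,\mathbb{F})$ with $\mathrm{Inf}(\bar\beta)=0$, so that $\beta(\pi(x),\pi(y))=-\sigma([x,y])$ for some linear $\sigma\colon L\to\mathbb{F}$ and all $x,y\in L$; put $\alpha:=-\sigma|_{H}$, which lies in $\mathrm{Hom}(H,\mathbb{F})$ automatically since $H$ is central. Using $f(\bar x,\bar y)=[\mu(\bar x),\mu(\bar y)]-\mu([\bar x,\bar y])$ together with $-\sigma([\mu(\bar x),\mu(\bar y)])=\beta(\bar x,\bar y)$, a short computation gives $(\alpha\circ f-\beta)(\bar x,\bar y)=(\sigma\circ\mu)([\bar x,\bar y])\in\mathrm{B}^{2}(L/H,\mathbb{F})$, so $\mathrm{Tra}(\alpha)=\overline{\alpha\circ f}=\bar\beta$.

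I expect no deep obstacle: the whole argument is diagram-chasing at the cochain level. The only thing to watch is the super sign convention — applying the cocycle/coboundary characterizations recorded in Section~2 with the correct powers of $(-1)$, and confirming that the auxiliary maps $\rho$, $g$, $\alpha$ are genuinely even — but because $H$ is central the transgression carries essentially none of the sign subtleties one would otherwise meet, so this stays routine.
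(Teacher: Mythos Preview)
Your proposal is correct and follows essentially the same route as the paper: a node-by-node verification of exactness using the section $\mu$, the decomposition $x=\mu(\pi(x))+h_x$ (your $\rho$ is the paper's $x\mapsto h_x$), and the identity $[x,y]=f(\pi(x),\pi(y))+\mu([\pi(x),\pi(y)])$. One small remark: the claim that ``every cochain in sight is even'' is not quite right---$\mathrm{Hom}(H,\mathbb{F})$ can have nonzero odd part---but since $\mathrm{Inf}$, $\mathrm{Res}$, $\mathrm{Tra}$ are all even maps, the exactness check goes through parity-by-parity without change, so this does not affect your argument.
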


\begin{proof}
First we show that the sequence is exact at $\mathrm{Hom}(L/H, \mathbb{F})$. Let $f\in \mathrm{Hom}(L/H, \mathbb{F})$. Since
$\mathrm{Inf}(f)(x)=f\circ\pi(x)=0$ for all $x\in L$, we have $f=0$. Thus, $\mathrm{Inf}(f)$ is an injective map.

Next we show that the sequence is exact at $\mathrm{Hom}(L, \mathbb{F})$. Let $f\in \mathrm{Hom}(L/H, \mathbb{F})$. Then $\mathrm{Inf}(f)=f\circ\pi$. Since
$\mathrm{Res}(\mathrm{Inf}(f))=f\circ\pi\circ i=0$, we have $\mathrm{Im}(\mathrm{Inf})\subseteq \ker(\mathrm{Res})$.
Conversely,
let $g\in \mathrm{Hom}(L, \mathbb{F})$ such that $\mathrm{Res}(g)=g\circ i=0$. Then $g(H)=0$ and
there exists $\bar{g}\in \mathrm{Hom}(L/H, \mathbb{F})$ such that $\mathrm{Inf}(\bar{g})=\bar{g}\circ\pi=g$. Thus,  $\ker(\mathrm{Res})\subseteq\mathrm{Im}(\mathrm{Inf})$.

Next we show that the sequence is exact at $\mathrm{Hom}(H, \mathbb{F})$. Let $g\in \mathrm{Hom}(L, \mathbb{F})$, $\mu$ a section of $\pi$ and $f$ a factor set defined by $\mu$.
Then \begin{align*}
     g\circ f(\bar{x}, \bar{y})= & g([\mu(\bar{x}), \mu(\bar{y})]-\mu([\bar{x}, \bar{y}])) \\
       = & (-1)^{|g||\mu(\bar{x})|}\mu(\bar{x})\cdot g(\mu(\bar{y}))-g(\mu([\bar{x}, \bar{y}]))\\
= & -g(\mu([\bar{x}, \bar{y}]).
     \end{align*}
It follows that $g\circ f\in \mathrm{B}^{2}(L/H, \mathbb{F})$.
Since $\mathrm{Tra}(\mathrm{Res}(g))=\mathrm{Tra}(g\circ i)=\overline{g\circ i\circ f}=0$, we have $\mathrm{Im}(\mathrm{Res})\subseteq \ker(\mathrm{Tra})$.
Conversely, let $\alpha\in \mathrm{Hom}(H, \mathbb{F})$ such that $\mathrm{Tra}(\alpha)=\overline{\alpha\circ f}=0$.
Then $\alpha\circ f\in \mathrm{B}^{2}(L/H, \mathbb{F})$. Clearly, there exists a linear function $\sigma:L/H\longrightarrow\mathbb{F}$ such that
$\alpha\circ f(\bar{x}, \bar{y})=-\sigma([\bar{x}, \bar{y}])$, where $\bar{x}=\pi(x), \bar{y}=\pi(y), x, y\in L$. Since $\bar{x}=\pi(x)=\pi\circ\mu(\bar{x})$, we have $\pi(x-\mu(\bar{x}))=0$ and $x-\mu(\bar{x})\in \ker\pi=\mathrm{Im}i=H$. There exists $h_{x}\in H$ such that
$x-\mu(\bar{x})=h_{x}$ for all $x\in L$. Let $x=\mu(\bar{x})+h_{x}$ and $y=\mu(\bar{y})+h_{y}$. Then $[x, y]=[\mu(\bar{x}),\mu(\bar{y})]$. Since $[x, y]=\mu([x,y])+h_{[x, y]}$, we have $[\mu(\bar{x}), \mu(\bar{y})]-\mu([\bar{x}, \bar{y}]=h_{[x, y]}$. Then
\begin{align*}
  \alpha(h_{[x, y]})
= & \alpha([\mu(\bar{x}), \mu(\bar{y})]-\mu([\bar{x}, \bar{y}])\\
  = & \alpha\circ f(\bar{x}, \bar{y})\\
= & -\sigma([\bar{x}, \bar{y}]).
\end{align*}
Define a function $\gamma:L\longrightarrow \mathbb{F}$ by $\gamma(x)=\alpha(h_{x})+\sigma(\bar{x})$. Since $\alpha, \sigma$ are linear functions, we have $\gamma$ is a linear function. Since $\gamma([x, y])=\alpha(h_{[x, y]})+\sigma([\bar{x}, \bar{y}])=0$,
 we have $\gamma([x, y])=(-1)^{|\gamma||x|}x\cdot\gamma(y)$. Thus, $\gamma\in \mathrm{Hom}(L, \mathbb{F})$. Note that $\gamma(h)=\alpha(h)+\sigma(\bar{h})=\alpha(h)$ for all $h\in H$. We have $\mathrm{Res}(\gamma)=\alpha$ and $\ker(\mathrm{Tra})\subseteq \mathrm{Im}(\mathrm{Res})$. This implies the sequence is exact at $\mathrm{Hom}(H, \mathbb{F})$.

Finally, we show that the sequence is exact at $\mathrm{H}^{2}(L/H, \mathbb{F})$. Suppose $\alpha\in \mathrm{Hom}(H, \mathbb{F})$ such that $\mathrm{Tra}(\alpha)=\overline{\alpha\circ f}$, where $f$ is a factor set defined by $\mu$
and $\alpha\circ f\in \mathrm{Z}^{2}(L/H, \mathbb{F})$. Then $\mathrm{Inf}(\overline{\alpha\circ f})=\overline{(\alpha\circ f)'}$, where $(\alpha\circ f)'(x, y)=\alpha\circ f(\pi(x), \pi(y))$ for $x, y\in L$. We will show $(\alpha\circ f)'\in \mathrm{B}^{2}(L, \mathbb{F})$.

Let $x=\mu(\bar{x})+h_{x}$ and $y=\mu(\bar{y})+h_{y}$. Then $[x, y]=[\mu(\bar{x}),\mu(\bar{y})]$. Since $[x, y]=\mu([x,y])+h_{[x, y]}$, we have $[\mu(\bar{x}), \mu(\bar{y})]-\mu([\bar{x}, \bar{y}]=h_{[x, y]}$. Hence $\alpha\circ f(\bar{x}, \bar{y})
=\alpha([\mu(\bar{x}), \mu(\bar{y})]-\mu([\bar{x}, \bar{y}])=\alpha(h_{[x, y]})$. Define a function $\theta: L\longrightarrow \mathbb{F}$ by $\theta(x)=-\alpha(h_{x})$. Clearly, $\theta$ is a linear function. Note that $\theta([x, y])=-\alpha(h_{[x, y]})=-
\alpha\circ f(\bar{x}),\mu(\bar{y})=-(\alpha\circ f)'(x, y)$. We have $(\alpha\circ f)'\in \mathrm{B}^{2}(L, \mathbb{F})$. Thus, $\overline{(\alpha\circ f)'}
=0$ and $\mathrm{Im}(\mathrm{Tra})\subseteq\ker(\mathrm{Inf})$.
Conversely, let $\bar{g}\in \ker(\mathrm{Inf})$
for some $g\in \mathrm{Z}^{2}(L/H, \mathbb{F})$. Then $g(\bar{x}, \bar{y})=g'(x, y)=-\theta([x, y])$ for some linear function $\theta: L\longrightarrow \mathbb{F}$. Since $\theta$ is a linear function, we have $\theta\circ f\in \mathrm{Z}^{2}(L/H, \mathbb{F})$ for some factor set. Let $x=\mu(\bar{x})+h_{x}$. Then $[x, y]=[\mu(\bar{x}), \mu(\bar{y})]$. We have
\begin{align*}
g'(x, y)= & g(\bar{x}, \bar{y})\\= & -\theta([x, y])\\
= & -\theta
([\mu(\bar{x}), \mu(\bar{y})])\\
= & -\theta([\mu(\bar{x}), \mu(\bar{y})])+\theta(\mu([\bar{x}, \bar{y}]))-\theta(\mu([\bar{x}, \bar{y}]))\\
= & -\theta\circ f(\bar{x}, \bar{y})-\theta\circ\mu([\bar{x}, \bar{y}]),
\end{align*}
where $\theta\circ\mu: L/H\longrightarrow \mathbb{F}$, Since $g$ and $-\theta\circ f $ are equivalent and $\bar{g}=\overline{-\theta\circ f}=-\mathrm{Tra}(\theta)$, we have $\ker(\mathrm{Inf})\subseteq\mathrm{Im}(\mathrm{Tra})$ and this implies the sequence is exact at $\mathrm{H}^{2}(L/H, \mathbb{F})$.
\end{proof}

Now we will use 5-sequences of cohomology to prove that the multiplier of a Lie superalgebra is isomorphic to the second cohomology group.

\begin{lemma}\label{lem312}
Let $Z$ be  a  central ideal of $L$. Then $L^{2}\cap Z$ is isomorphic to the image of $\mathrm{Hom}(Z, \mathbb{F})$ under the map
 $\mathrm{Tra}$. In particular, if the map $\mathrm{Tra}$ is surjective, then $L^{2}\cap Z\cong \mathrm{H}^{2}(L/Z, \mathbb{F}).$
\end{lemma}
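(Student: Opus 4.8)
The plan is to feed the central extension $0\longrightarrow Z\stackrel{i}{\longrightarrow} L\stackrel{\pi}{\longrightarrow}L/Z\longrightarrow 0$ into the 5-sequence of Theorem \ref{the311} and extract $\mathrm{Im}(\mathrm{Tra})$ from exactness, then identify every space in sight with a (graded) dual space. Exactness at $\mathrm{Hom}(Z,\mathbb{F})$ gives $\ker(\mathrm{Tra})=\mathrm{Im}(\mathrm{Res})$, and since $\mathrm{Tra}$ surjects onto its own image, the first isomorphism theorem yields
$$\mathrm{Im}(\mathrm{Tra})\;\cong\;\mathrm{Hom}(Z,\mathbb{F})/\mathrm{Im}(\mathrm{Res}).$$
So the whole problem reduces to computing the right-hand side explicitly.

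Next I would pin down the two $\mathrm{Hom}$-spaces concretely. Because $Z$ is central, the adjoint action of $L$ on $Z$ is zero, so \emph{every} linear functional $Z\to\mathbb{F}$ is automatically an $L$-module map; hence $\mathrm{Hom}(Z,\mathbb{F})=Z^{*}$, the full $\mathbb{Z}_{2}$-graded dual. On the other hand, for $g\in\mathrm{Hom}(L,\mathbb{F})$ the module condition forces $g(L^{2})=0$, so $\mathrm{Hom}(L,\mathbb{F})$ is precisely the space of linear functionals on $L$ vanishing on $L^{2}$, and $\mathrm{Res}(g)=g\circ i=g|_{Z}$. From these descriptions, a functional $\phi\in Z^{*}$ lies in $\mathrm{Im}(\mathrm{Res})$ if and only if it is the restriction of some functional on $L$ that kills $L^{2}$, and this happens if and only if $\phi$ vanishes on $L^{2}\cap Z$: the nontrivial direction is to define a functional on $L^{2}+Z$ that is $0$ on $L^{2}$ and $\phi$ on $Z$ — this is well-defined exactly because $\phi(L^{2}\cap Z)=0$ — and then extend it to all of $L$. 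Consequently $\mathrm{Im}(\mathrm{Res})$ is the annihilator $(L^{2}\cap Z)^{\perp}$ of $L^{2}\cap Z$ inside $Z^{*}$.

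Finally, the restriction map $Z^{*}\to(L^{2}\cap Z)^{*}$ is surjective with kernel $(L^{2}\cap Z)^{\perp}$, so $\mathrm{Hom}(Z,\mathbb{F})/\mathrm{Im}(\mathrm{Res})\cong(L^{2}\cap Z)^{*}$; since $L^{2}\cap Z$ is finite-dimensional, this is in turn isomorphic to $L^{2}\cap Z$ itself as a $\mathbb{Z}_{2}$-graded vector space (the even and odd parts of $V$ and $V^{*}$ matching up). Combining with the displayed isomorphism gives $L^{2}\cap Z\cong\mathrm{Im}(\mathrm{Tra})$. For the last assertion: if $\mathrm{Tra}$ is surjective then $\mathrm{Im}(\mathrm{Tra})=\mathrm{H}^{2}(L/Z,\mathbb{F})$ outright, whence $L^{2}\cap Z\cong\mathrm{H}^{2}(L/Z,\mathbb{F})$.

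I expect the main obstacle to be the careful identification of $\mathrm{Im}(\mathrm{Res})$ with the annihilator of $L^{2}\cap Z$ — in particular verifying well-definedness of the auxiliary functional on $L^{2}+Z$, its extension to $L$, and doing the bookkeeping so that the grading is respected, so that the concluding $V\cong V^{*}$ step really delivers an isomorphism of $\mathbb{Z}_{2}$-graded vector spaces. Everything else is a formal consequence of the exactness supplied by Theorem \ref{the311}.
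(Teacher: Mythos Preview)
Your proposal is correct and follows essentially the same route as the paper's own proof: apply the 5-sequence to $0\to Z\to L\to L/Z\to 0$, use exactness at $\mathrm{Hom}(Z,\mathbb{F})$ together with the first isomorphism theorem to get $\mathrm{Im}(\mathrm{Tra})\cong\mathrm{Hom}(Z,\mathbb{F})/\mathrm{Im}(\mathrm{Res})$, identify $\mathrm{Im}(\mathrm{Res})$ with the set of functionals on $Z$ vanishing on $L^{2}\cap Z$ (the paper phrases this as $\ker(\mathrm{res})$ for the restriction $\mathrm{res}:\mathrm{Hom}(Z,\mathbb{F})\to\mathrm{Hom}(L^{2}\cap Z,\mathbb{F})$, which is exactly your annihilator $(L^{2}\cap Z)^{\perp}$), and finish by self-duality of the finite-dimensional space $L^{2}\cap Z$. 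Your extension step---defining a functional on $L^{2}+Z$ and extending to $L$---is the same construction the paper carries out via the chain $Z/(L^{2}\cap Z)\cong(Z+L^{2})/L^{2}\hookrightarrow L/L^{2}$.
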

\begin{proof}
Let $0\longrightarrow Z\longrightarrow L\longrightarrow L/Z\longrightarrow0$ be the natural exact sequence. By Theorem \ref{the311}, we have the sequence
$$\mathrm{Hom}(L, \mathbb{F})\stackrel{\mathrm{Res}}{\longrightarrow}\mathrm{Hom}(Z, \mathbb{F})\stackrel{\mathrm{Tra}}{\longrightarrow}\mathrm{H}^{2}(L/Z, \mathbb{F})$$
is exact. Let $J=\mathrm{Im}(\mathrm{Res})=\ker(\mathrm{Tra})$. Then $\mathrm{Hom}(Z, \mathbb{F})/J\cong \mathrm{Im}(\mathrm{Tra})$. If the map $\mathrm{Tra}$ is surjective, then $L^{2}\cap Z\cong \mathrm{H}^{2}(L/Z, \mathbb{F})$.
We only show that $\mathrm{Hom}(Z, \mathbb{F})/J\cong L^{2}\cap Z$. Note that $L^{2}\cap Z$ is abelian and it is isomorphic to the dual superspace $\mathrm{Hom}(L^{2}\cap Z, \mathbb{F})$, consider the restriction homomorphism $\mathrm{res}:\mathrm{Hom}(Z, \mathbb{F})\longrightarrow \mathrm{Hom}(L^{2}\cap Z, \mathbb{F})$. Since $Z$ and $L^{2}\cap Z$ are abelian, we have $\mathrm{res}$ is surjective and $\mathrm{Hom}(Z, \mathbb{F})/\ker(\mathrm{res}) \cong \mathrm{Hom}(L^{2}\cap Z, \mathbb{F})$. Thus we only need to show $J\cong\ker(\mathrm{res})$.

Suppose $f\in J$ and $\hat{f}\in \mathrm{Hom}(L, \mathbb{F})$ such that $\mathrm{Res}(\hat{f})=f$. Since $\mathbb{F}$ is a trivial $L$-module,
we have $\hat{f}([x, y])=(-1)^{|x||f|}x\cdot\hat{f}(y)=0$ for all $x, y\in L$. It follows that $L^{2}\subseteq \ker \hat{f}$. Hence $L^{2}\cap Z\subseteq \ker f$ and $f\in \ker(\mathrm{res})$. Thus, $J\subseteq\ker(\mathrm{res})$. Conversely, let $f\in \ker(\mathrm{res})$. Then $f\in \mathrm{Hom}(Z, \mathbb{F})$ and $f(L^{2}\cap Z)=0$. Hence $f$ induces a homomorphism $f_{1}: Z/(L^{2}\cap Z)\longrightarrow \mathbb{F}$, where $f_{1}(z+L^{2}\cap Z)=f(z)$ for $z\in Z$. Since $Z/(L^{2}\cap Z)\cong (Z+L^{2})/L^{2}$, we can get a homomorphism $f_{2}:(Z+L^{2})/L^{2}\longrightarrow \mathbb{F}$, where $f_{2}(z+L^{2})=f_{1}(z+L^{2}\cap Z)$ for $z\in Z$. Consider the homomorphism $(Z+L^{2})/L^{2}\longrightarrow L/L^{2}$, there exists a homomorphism $f_{3}: L/L^{2}\longrightarrow \mathbb{F}$, where $f_{3}(x+L^{2})=f_{2}(x+L^{2})$ for $x\in Z$. Since $L/L^{2}$ is abelian, $f_{3}$ can be extended to $\hat{f}: L\longrightarrow \mathbb{F}$,
where $\hat{f}(x)=f_{3}(x+L^{2})$. Thus, $\hat{f}\in J$ and $\ker(\mathrm{res})\subseteq J$.
\end{proof}

\begin{lemma}\label{lem313}
Let $0\longrightarrow R\longrightarrow F\stackrel{\pi}{\longrightarrow}L\longrightarrow 0$ be a free presentation of a Lie superalgebra $L$.  View $L$ as the factor algebra $(F/[F, R])/(R/[F, R])$. Then the map $\mathrm{Tra}:\mathrm{Hom}(R/[F, R], \mathbb{F})\longrightarrow \mathrm{H}^{2}(L, \mathbb{F})$ is surjective.
\end{lemma}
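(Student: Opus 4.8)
The plan is to derive the surjectivity of $\mathrm{Tra}$ from the exactness of the 5-sequence of cohomology (Theorem~\ref{the311}), by showing that the inflation map leaving $\mathrm{H}^{2}(L,\mathbb{F})$ is zero.

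First I would arrange the input for Theorem~\ref{the311}. Set $\widetilde{L}=F/[F,R]$ and $Z=R/[F,R]$. Since $[R,F]=[F,R]$, the bracket $[\bar r,\bar u]$ vanishes in $\widetilde{L}$ for every $r\in R$ and $u\in F$, so $Z$ is a central subalgebra of $\widetilde{L}$, and $\widetilde{L}/Z\cong F/R\cong L$. Applying Theorem~\ref{the311} to the central extension $0\to Z\to\widetilde{L}\to L\to 0$ gives an exact sequence
$$\mathrm{Hom}(Z,\mathbb{F})\stackrel{\mathrm{Tra}}{\longrightarrow}\mathrm{H}^{2}(L,\mathbb{F})\stackrel{\mathrm{Inf}}{\longrightarrow}\mathrm{H}^{2}(\widetilde{L},\mathbb{F}),$$
in which exactness at $\mathrm{H}^{2}(L,\mathbb{F})$ means $\mathrm{Im}(\mathrm{Tra})=\ker(\mathrm{Inf})$. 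Hence it suffices to prove that $\mathrm{Inf}=0$, for then $\mathrm{Im}(\mathrm{Tra})=\mathrm{H}^{2}(L,\mathbb{F})$.

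To prove $\mathrm{Inf}=0$, fix $g\in\mathrm{Z}^{2}(L,\mathbb{F})$ and let $0\to\mathbb{F}\to E\stackrel{p}{\to}L\to 0$ be a central extension representing $\bar g$ under the correspondence recalled at the end of Section~2. By the description of $\mathrm{Inf}$ on $\mathrm{H}^{2}$ given just before Theorem~\ref{the311}, $\mathrm{Inf}(\bar g)=\overline{g'}$ with $g'(u,v)=g(q(u),q(v))$, where $q\colon\widetilde{L}\to L$ is the map induced by $\pi$; equivalently, $\overline{g'}$ classifies the central extension $\widetilde{E}=\{(e,u)\in E\times\widetilde{L}\mid p(e)=q(u)\}\to\widetilde{L}$ of $\widetilde{L}$ by $\mathbb{F}$. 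Since $F$ is free and $p$ is surjective, the homomorphism $\pi\colon F\to L$ lifts to a Lie superalgebra homomorphism $\phi\colon F\to E$ with $p\circ\phi=\pi$. Then $\phi(R)\subseteq\ker p=\mathbb{F}\subseteq\mathrm{Z}(E)$, so $\phi([F,R])=[\phi(F),\phi(R)]=0$ and $\phi$ descends to $\bar\phi\colon\widetilde{L}\to E$ with $p\circ\bar\phi=q$. Therefore $u\mapsto(\bar\phi(u),u)$ is a Lie superalgebra section of $\widetilde{E}\to\widetilde{L}$, so this extension splits and $\mathrm{Inf}(\bar g)=\overline{g'}=0$. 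As $g$ was arbitrary, $\mathrm{Inf}=0$, and exactness of the displayed sequence then forces $\mathrm{Tra}$ to be surjective.

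The crux is the construction of the lift $\phi$ together with the remark that centrality of $\ker p$ makes $\phi$ annihilate $[F,R]$; this is the only place where freeness of the presentation is used, and everything else is formal bookkeeping with the 5-sequence. If one would rather stay at the level of cochains, writing $\phi=(\pi,\lambda)$ for a linear map $\lambda\colon F\to\mathbb{F}$ and using that $\phi$ is a homomorphism shows $\lambda([F,R])=0$, so $\lambda$ induces a linear map $\bar\lambda\colon\widetilde{L}\to\mathbb{F}$ with $g'(u,v)=-\bar\lambda([u,v])$ after normalizing the sign convention in the cocycle attached to $E$, exhibiting $g'$ as a coboundary; I would nonetheless keep the extension-theoretic phrasing to avoid sign chasing.
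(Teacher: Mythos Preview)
Your argument is correct, but it is organized differently from the paper's. The paper does not invoke the exactness of the 5-sequence at $\mathrm{H}^{2}(L,\mathbb{F})$ at all: given $\bar{\alpha}\in\mathrm{H}^{2}(L,\mathbb{F})$, it takes the associated central extension $0\to\mathbb{F}\to L'\stackrel{\theta}{\to}L\to 0$, uses the lifting property of the free presentation (quoted as \cite[Theorem 2.5]{20}) to produce a homomorphism $\beta\colon F/[F,R]\to L'$ over $\mathrm{id}_{L}$, sets $\beta_{1}=\beta|_{R/[F,R]}\in\mathrm{Hom}(R/[F,R],\mathbb{F})$, and then checks by a direct cocycle computation that $\mathrm{Tra}(\beta_{1})=\bar{\alpha}$. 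You instead reduce surjectivity of $\mathrm{Tra}$ to the vanishing of $\mathrm{Inf}\colon\mathrm{H}^{2}(L,\mathbb{F})\to\mathrm{H}^{2}(F/[F,R],\mathbb{F})$ via Theorem~\ref{the311}, and prove that vanishing by exhibiting a Lie-superalgebra section of the pulled-back extension. The essential input is the same in both proofs---the lift $F\to E$ (resp.\ $F\to L'$) coming from freeness, which factors through $F/[F,R]$ because the image of $R$ lies in the center---but the packaging differs: the paper identifies the preimage under $\mathrm{Tra}$ explicitly, while your argument is extension-theoretic and avoids computing $\mathrm{Tra}(\beta_{1})$ by hand. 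Your route is slightly more conceptual and reusable (it really shows that every inflated class on $F/[F,R]$ is trivial), whereas the paper's route is more self-contained in that it does not rely on the exactness statement of Theorem~\ref{the311}.
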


\begin{proof}
Let $\bar{\alpha}\in \mathrm{H}^{2}(L, \mathbb{F})$ and $0\longrightarrow\mathbb{F}\longrightarrow L'\stackrel{\theta}{\longrightarrow}L\longrightarrow 0$ be a central extension associated with $\bar{\alpha}$. By \cite[Theorem 2.5]{20}, there exists a Lie superalgebra homomorphism
$\beta: F/[F, R]\longrightarrow L'$ such that the following diagram is commutative:
$$\xymatrix{0\ar[r]&\dfrac{R}{[F, R]}\ar[d]_{\beta_{1}}\ar[r]&\dfrac{F}{[F, R]}\ar[d]_{\beta}\ar[r]^{\tilde{\pi}}&L\ar@{=}[d]_{\mathrm{id}}\ar[r]&0\\
0\ar[r]&\mathbb{F}\ar[r]&L'\ar[r]^{\theta}&L\ar[r]&0,}$$
where $\beta_{1}$ is the restriction of $\beta$ to $R/[F, R]$. Then $\beta_{1}\in \mathrm{Hom}(R/[F, R], \mathbb{F})$. We claim that $\mathrm{Tra}(\beta_{1})=\bar{\alpha}$ and the map $\mathrm{Tra}$ is surjective. In fact, let $\mu$ be a section of $\tilde{\pi}$. Then $\theta\circ\beta\circ\mu=\tilde{\pi}\circ\mu=\mathrm{id}_{L}$ and $\beta\circ\mu$ is a section of $\theta$. Let $\lambda=\beta\circ\mu$ and $g(x, y)=[\lambda(x), \lambda(y)]-\lambda([x, y])$ for $x, y\in L$. Then $g\in \mathrm{Z}^{2}(L, \mathbb{F})$ and $\bar{g}=\bar{\alpha}$ in $\mathrm{H}^{2}(L, \mathbb{F})$.
Since
\begin{align*}
  g(x, y)= &[\lambda(x), \lambda(y)]-\lambda([x, y])  \\
  = &[\beta\circ\mu(x), \beta\circ\mu(y)]-\beta\circ\mu([x, y])\\
  = &\beta([\mu(x), \mu(y)]-\mu([x, y]))\\
  = &\beta_{1}([\mu(x), \mu(y)]-\mu([x, y]))\\
  = &\beta_{1}(f(x, y)),
\end{align*}
where $f(x, y)=[\mu(x), \mu(y)]-\mu([x, y])$, we have $\mathrm{Tra}(\beta_{1})=\overline{\beta_{1}\circ f}=\bar{g}=\bar{\alpha}$ as we claimed.
\end{proof}

\begin{theorem}\label{the314}
Let $0\longrightarrow R\longrightarrow F\stackrel{\pi}{\longrightarrow}L\longrightarrow 0$ be a free presentation of a Lie superalgebra $L$. Then $\mathrm{H}^{2}(L, \mathbb{F})\cong (F^{2}\cap R)/[F, R]$. In particular, $\mathcal{M}(L)\cong \mathrm{H}^{2}(L, \mathbb{F})$.
\end{theorem}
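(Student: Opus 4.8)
The plan is to apply Lemma \ref{lem312} and Lemma \ref{lem313} to the quotient Lie superalgebra $\widetilde{F}=F/[F,R]$ equipped with the subspace $Z=R/[F,R]$. First I would check that $Z$ is a central ideal of $\widetilde{F}$: since $R$ is an ideal of $F$ we have $[R,F]\subseteq R$, and since $[R,F]=[F,R]$ we get $[Z,\widetilde{F}]=0$ in $\widetilde{F}$; moreover $\widetilde{F}/Z\cong F/R\cong L$ as Lie superalgebras. Thus we are exactly in the setting of Theorem \ref{the311} and Lemma \ref{lem312}, with the ambient algebra $\widetilde{F}$ and central ideal $Z$, and with $\widetilde{F}/Z$ identified with $L$.

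Next, Lemma \ref{lem313} (stated for precisely this presentation, after identifying $L$ with $(F/[F,R])/(R/[F,R])$) tells us that the transgression $\mathrm{Tra}\colon\mathrm{Hom}(Z,\mathbb{F})\longrightarrow \mathrm{H}^{2}(L,\mathbb{F})$ is surjective. Feeding this surjectivity into the ``in particular'' clause of Lemma \ref{lem312} yields $\widetilde{F}^{\,2}\cap Z\cong \mathrm{H}^{2}(\widetilde{F}/Z,\mathbb{F})=\mathrm{H}^{2}(L,\mathbb{F})$.

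It then remains to identify $\widetilde{F}^{\,2}\cap Z$ with $(F^{2}\cap R)/[F,R]$. Since $R\subseteq F$ we have $[F,R]\subseteq F^{2}$, so the correspondence theorem for the quotient $F\twoheadrightarrow F/[F,R]$ gives $\widetilde{F}^{\,2}=(F^{2}+[F,R])/[F,R]=F^{2}/[F,R]$ and $Z=R/[F,R]$; intersecting two submodules of $F/[F,R]$ that both contain $[F,R]$ gives $\widetilde{F}^{\,2}\cap Z=(F^{2}\cap R)/[F,R]$. Combining with the previous paragraph gives $\mathrm{H}^{2}(L,\mathbb{F})\cong (F^{2}\cap R)/[F,R]$. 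Finally, since $[R,F]=[F,R]$, Definition \ref{def214} identifies $\mathcal{M}(L)=(F^{2}\cap R)/[R,F]$ with the same quotient, so $\mathcal{M}(L)\cong\mathrm{H}^{2}(L,\mathbb{F})$.

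The only real work is bookkeeping: confirming that $\widetilde{F}$ is again a Lie superalgebra with $Z$ a central ideal (immediate), that $\widetilde{F}/Z\cong L$, and that the quotient identifications $\widetilde{F}^{\,2}=F^{2}/[F,R]$ and $\widetilde{F}^{\,2}\cap Z=(F^{2}\cap R)/[F,R]$ hold; these are routine isomorphism-theorem manipulations. The genuinely nontrivial ingredients --- exactness of the $5$-sequence and surjectivity of $\mathrm{Tra}$ for a free presentation --- are already supplied by Theorem \ref{the311} and Lemma \ref{lem313}, so I do not anticipate any substantive obstacle in assembling the theorem.
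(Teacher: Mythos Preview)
Your proposal is correct and follows essentially the same route as the paper: pass to $\widetilde{F}=F/[F,R]$ with central ideal $Z=R/[F,R]$, invoke Lemma~\ref{lem313} for surjectivity of $\mathrm{Tra}$, feed this into Lemma~\ref{lem312}, and then identify $\widetilde{F}^{\,2}\cap Z$ with $(F^{2}\cap R)/[F,R]$ via the correspondence theorem. Your write-up is in fact slightly more careful than the paper's in verifying that $Z$ is central and that the quotient identifications hold.
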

\begin{proof}
Note that $0\longrightarrow \frac{R}{[F, R]}\longrightarrow \frac{F}{[F, R]}\stackrel{\tilde{\pi}}{\longrightarrow} L\longrightarrow 0$ be a central extension of a Lie superalgebra $L$, where $\tilde{\pi}$ is induced by $\pi$. By Lemma \ref{lem313}, the map $\mathrm{Tra}:\mathrm{Hom}(R/[F, R], \mathbb{F})\longrightarrow \mathrm{H}^{2}(L, \mathbb{F})$ is surjective. Then by Lemma \ref{lem312}, we have $$(F/[F, R])^{2}\cap R/[F, R] \cong \mathrm{H}^{2}((F/[F, R])/(R/[F, R]), \mathbb{F})\cong \mathrm{H}^{2}(L, \mathbb{F}).$$ Since $$(F/[F, R])^{2}\cap R/[F, R]\cong F^{2}/[F, R]\cap R/[F, R]\cong (F^{2}\cap R)/[F, R],$$ we have $\mathrm{H}^{2}(L, \mathbb{F})\cong (F^{2}\cap R)/[F, R]$.  By Definition \ref{def214}, we have $\mathcal{M}(L)=(F^{2}\cap R)/[F, R]\cong \mathrm{H}^{2}(L, \mathbb{F})$.
\end{proof}



\begin{thebibliography}{99}
\bibitem{1} I. Schur. Uber die darstellung der endlichen gruppen durch gebrochene lineare substitutionen. J. fur Math. 1904, 127: 20-50.
\bibitem{2} G. Karpilovsky. The schur multiplier. London Math. Soc. 1987.
\bibitem{3} P. Batten. Covers and multipliers of Lie algebras. Dissertation. North Carolina State University. 1993, 1-41.
\bibitem{4} P. Batten. Ernest Stitzinger. On covers of Lie algebras. Comm. Algebra 1996, 24(14): 4301-4317.
\bibitem{5} K. Moneyhun. Isoclinisms in Lie algebras. Algebras Groups Geom. 11. 1994, 9-22.
\bibitem{6} A. R. Salemkar, V. Alamian and H. Mohammadzadeh. Some properties of the schur multiplier and covers of Lie algebras. Comm. Algebra 2008, 36: 697-707.
\bibitem{7} A. R. Salemkar, B. Edalatzadeh and H. Mohammadzadeh. On covers of perfect Lie algebras. Algebra Colloq. 2011, 18(3): 419-427.
\bibitem{8} P. Batten, K. Moneyhun and E. Stitzinger. On characterizing nilpotent Lie algebras by their multipliers.  Comm. Algebra 1996, 24(14): 4319-4330.
\bibitem{9} P. Hardy, E. Stitzinger. On characterizing nilpotent Lie algebras by their multipliers, t(L)=3, 4, 5, 6. Comm. Algebra. 1998, 26(11): 3527-3539.
\bibitem{10} P. Hardy. On characterizing nilpotent Lie algebras by their multipliers, III. Comm. Algebra 2005, 33: 4205-4210.
\bibitem{11} P. Niroomand. On dimension of the schur multiplier of nilpotent Lie algebras. Cent. Eur. J. Math. 2011, 9(1): 57-64.
\bibitem{12} P. Niroomand, F. Russo. A note on the schur multiplier of a nilpotent Lie algebra. Comm. Algebra 2011, 39: 1293-1297.
\bibitem{13} A. R. Salemkar, S. A. Niri. Bounds for the dimension  of the  schur multiplier of a pair of nilpotent  Lie algebras. Asian-Eur. J. Math. 2012, 5(4) 1250059 (9 pages).
\bibitem{14} M. Araskhan. The dimension of the c-nilpotent multiplier. J. Algebra 2013, 386: 105-112.
\bibitem{15} Z. Riyahi, A. R. Salemkar. A remark on the schur multiplier of nilpotent Lie algebras. J. Algebra 2015, 438: 1-6.
\bibitem{16} F. Saeedi, H. Arabyani and P. Niroomand. On dimension of schur multiplier of nilpotent Lie algebras II. Asian-Eur. J. Math. 2017, 10(4) 1750076 (8 pages).
\bibitem{17} S. Nayak. Multipliers of nilpotent Lie superalgebras. arXiv:1801.03798, 2018.
\bibitem{18} S. Nayak. Isoclinism in Lie superalgebras. arXiv:1804.10434, 2018.
\bibitem{19} Y. L. Zhang, W. D. Liu. Classification of nilpotent Lie superalgebras of multiplier-rank $\leq$ 2. arXiv:1807.08884, 2018.
\bibitem{20} X. X. Miao, W. D. Liu. Multipliers, covers and stem extensions for Lie superalgebras. arXiv:1808.02991v2 [math.RA], 2018.
\bibitem{21} R. Lal. Algebra 2. Infosys Science Foundation Series Mathematical. Springer. 2017: 389-417.
\bibitem{22} I. M. Musson, Lie Superalgebras and enveloping Algebras, GSM 131, Amer. Math. Soc.
Providence, RI, 2012: 355-380.
\bibitem{23} C. Chevalley, S. Eilenberg. Cohomlogy theory of Lie groups and Lie algebras, Trans. Amer. Math. Soc. 1948, 63: 120-122.
\end{thebibliography}
\end{document}